\RequirePackage{fix-cm}
\documentclass[smallextended]{svjour3}       
\smartqed  
\usepackage{graphicx}
\usepackage{color}
\usepackage{amsmath, amsfonts, amssymb}
\usepackage{bm}
\usepackage{hyperref}   
\usepackage{mathtools} 
\usepackage{algorithm}
\usepackage{algorithmic}
\usepackage{booktabs} 
\usepackage{comment}
\def\d{\mbox{d}}
 \journalname{Journal of Scientific Computing}
\begin{document}

\title{An adaptive radial basis function approach for efficiently solving multidimensional spatiotemporal integrodifferential equations
}

\titlerunning{Adaptive RBF approach for spatiotemporal equations}        

\author{Mingtao Xia      \and
        Qijing Shen 
}


\institute{Mingtao Xia (corresponding author) \at
              Department of Mathematics, University of Houston, Houston, Texas, 77204 \\
              School of Mathematics, University of Birmingham, Edgbaston, Birmingham, B15 2TT\\
              \email{mxia4@uh.edu}           
           \and
           Qijing Shen \at
              Nuffield School of Medicine, University of Oxford, Oxford, OX3 7BN, United Kingdom\\
              \email{qijing.shen@reuben.ox.ac.uk}
}

\date{Received: date / Accepted: date}

\maketitle

\begin{abstract}
In this work, we propose an adaptive radial basis function (RBF) approach for the efficient solution of multidimensional spatiotemporal integrodifferential equations. Our approach can automatically adjust the shape of RBFs and provide an easy-to-implement and mesh-free approach for solving spatiotemporal equations. 
Specifically, we analyze how the proposed method mitigates the curse of dimensionality by adaptively adjusting the scales and centers of the radial basis functions when the solution is spatially anisotropic. Through a range of numerical examples, we demonstrate the effectiveness of our approach for solving multidimensional spatiotemporal integrodifferential equations.
\keywords{Radial Basis Function \and Numerical PDE \and Multidimensional Spatiotemporal Integrodifferential Equation \and Neural ODE}
\subclass{	65M15  \and 65M70 \and 65N99}
\end{abstract}

\section{Introduction}

Multidimensional spatiotemporal equations, commonly formulated as partial differential equations (PDEs) or integrodifferential equations, play a fundamental role in modeling complex systems that evolve across both space and time. Such equations arise in a wide range of physical, biological, and engineering applications, including reaction–diffusion processes \cite{murray2002mathematical}, fluid dynamics \cite{batchelor2000introduction}, and population dynamics \cite{perthame2007transport}. In higher-dimensional settings, these models frequently incorporate nonlocal interactions to describe phenomena such as turbulent transport \cite{pope2000turbulent}, the propagation of neural activity in the brain \cite{ermentrout1998neural}, and nonlocal material responses in elasticity and fracture mechanics \cite{silling2000reformulation}.

Despite their broad applicability, analytical solutions to multidimensional spatiotemporal integrodifferential equations are rarely available due to their intrinsic complexity, strong nonlinearities, and the presence of nonlocal terms. As a result, numerical methods play a crucial role in analyzing and simulating such systems. Classical approaches include finite difference, finite element, and spectral methods, each offering distinct advantages in terms of simplicity, flexibility, or accuracy \cite{trefethen2000spectral,johnson2012finite}. Finite difference methods are straightforward and well-suited to structured grids, finite element methods accommodate complex geometries and boundary conditions \cite{zienkiewicz2005finite}, and spectral methods can achieve high accuracy with relatively few degrees of freedom for smooth solutions \cite{boyd2001chebyshev}. However, as the spatial dimension increases, these traditional techniques often suffer from the curse of dimensionality, whereby computational costs—such as the number of grid points or basis functions—grow exponentially with the dimension. This rapid increase in complexity severely limits the feasibility of high-dimensional simulations, motivating the development of more efficient numerical methodologies \cite{hackbusch2012tensor}.

To address this challenge, various efficient numerical approaches tailored for multidimensional tasks, such as sparse grids, low-rank tensor methods, and adaptive multiscale schemes, have been developed, providing promising strategies for efficiently solving some high-dimensional spatiotemporal problems \cite{hutzenthaler2020overcoming,bungartz2004sparse,bachmayr2023low}. Yet, most existing approaches are for static problems, \textit{e.g.}, heat equations. For spatiotemporal equations, the behavior of the solutions to them may evolve temporally \cite{chou2023adaptive,xia2021efficient}, requiring adaptive adjustment of the numerical scheme. An adaptive hyperbolic-cross-space spectral method was recently developed to efficiently solve certain multidimensional spatiotemporal integrodifferential equations \cite{deng2025adaptive,xia2024learning}. However, there are several restrictions when adopting this adaptive spectral method. First, constraints are often imposed on the spatial domain-for example, requiring it to be a box or $\mathbb{R}^d$-so that a set of orthogonal basis functions can be constructed explicitly. Second, the solution of the spatiotemporal equation is typically assumed to be sufficiently smooth to guarantee exponential convergence. Third, selecting an optimal hyperbolic cross space generally requires prior knowledge of the solution \cite{shen2010sparse,luo2013hermite}, which may be unavailable in practice.

Radial basis function (RBF) methods have become a powerful and flexible tool for solving multidimensional PDEs on irregularly shaped domains. Unlike traditional mesh-based methods such as finite difference or finite element methods, RBF approaches are meshfree and dimension-independent, making them especially suitable for problems defined on complex geometries or evolving domains \cite{fasshauer2007meshfree}. In these methods, the solution of a PDE is approximated by a linear combination of radial basis functions centered at data points, allowing for high-order accuracy and spectral convergence when infinitely smooth RBFs, such as Gaussian or multiquadric functions, are used \cite{flyer2009radial}. Due to these advantages, RBF-based numerical schemes have been successfully applied to numerically solving a wide range of multidimensional PDEs. When adopting RBF-based numerical approaches, proper scales and centers for RBFs are essential for reducing approximation error and improving accuracy \cite{billings2007generalized,chen2011numerical,zhang2017adaptive}. In particular, when solving spatiotemporal integrodifferential equations, the solution behavior may evolve over time, necessitating appropriate adaptation of the radial basis functions. Previous work has shown that adjusting the RBF centers while keeping the scales fixed can be essential for numerical success when solving one-dimensional time-dependent PDEs \cite{sarra2005adaptive}. However, to the best of our knowledge, there has been limited research on the development of efficient methods that can adaptively and automatically evolve both the centers and scales of RBFs for the solution of multidimensional spatiotemporal integrodifferential equations.

In this manuscript, we propose an adaptive RBF approach for the efficient solution of multidimensional spatiotemporal integrodifferential equations. By integrating the neural ordinary differential equation (neural ODE) framework \cite{chen2018neural}, the proposed method automatically adjusts the centers and scales of the RBFs over time in response to the evolving solution. Unlike traditional mesh-based methods such as finite difference and finite element methods, the proposed approach is mesh-free and therefore straightforward to implement. In contrast to spectral methods, the adaptive RBF framework can be applied to general spatial domains-including bounded, unbounded, and irregularly shaped domains-and does not require the solution to be highly smooth. Moreover, because the RBFs are global in space, the proposed method provides a unified framework for solving both spatiotemporal PDEs and more general integrodifferential equations. This distinguishes our approach from many existing machine-learning-based methods, such as physics-informed neural networks \cite{raissi2019physics,cuomo2022scientific} and PDE-Net \cite{long2018pde}, which primarily target PDEs and are less naturally suited to integrodifferential formulations.
We shall demonstrate the effectiveness of our adaptive RBF approach for solving multidimensional spatiotemporal integrodifferential equations from both theoretical and numerical aspects, and the main contributions of our work are as follows:
\begin{enumerate}
\item We integrate the neural ODE framework with RBFs and develop an adaptive RBF method. Our approach is mesh-free and simple to implement, providing a unified and efficient way to solve multidimensional spatiotemporal integrodifferential equations.
\item We conduct an analysis for RBF-based approximations of spatiotemporal integrodifferential equations and derive an explicit error bound. This analysis shows that the proposed adaptive RBF strategy can partially alleviate the ``curse of dimensionality" when the solution exhibits spatial anisotropy.
\item We demonstrate the effectiveness of the adaptive RBF approach through a variety of numerical experiments on spatiotemporal equations, highlighting that adaptive adjustment of the RBF shapes and centers is essential for accurate numerical performance.
\end{enumerate}

The remainder of this manuscript is organized as follows. In Section~\ref{section2}, we present and analyze the proposed adaptive RBF approach for solving multidimensional spatiotemporal integrodifferential equations. In Section~\ref{section3}, we demonstrate the effectiveness of the method through a series of numerical examples. Finally, in Section~\ref{section4}, we summarize the main findings and discuss several directions for future research.

\section{Adaptive RBF approach for solving multidimensional spatiotemporal integrodifferential equations}
\label{section2}
In this section, we present and analyze the adaptive strategies used to apply radial basis functions for solving multidimensional spatiotemporal equations. For simplicity, in our theoretical analysis, we focus on the following general form of a spatiotemporal equation subject to homogeneous Dirichlet boundary conditions:
\begin{equation}
\begin{aligned}
    &\frac{\partial u(\bm{x}, t)}{\partial t} = A(u(\bm{x}, t), \bm{x}, t), \bm{x}, t), \,\,\bm{x}\in\Omega, t\in[0, T],\\
    & u(\bm{x}, 0) = u_0(\bm{x}), \,\, u(\bm{x}, t) = 0, \,\,\bm{x}\in\partial\Omega.
    \end{aligned}
    \label{model_problem}
\end{equation}
In Eq.~\eqref{model_problem}, $A$ denotes a general integrodifferential operator acting only on the spatial variable $\bm{x}$. We assume that the model problem \eqref{model_problem} is well posed and admits a unique solution $u(\bm{x},t)\in C^1([0,T],F_k^d)$ for some $k\ge 1$, where $F_k^d$ denotes the function space introduced in \cite{barthelmann2000high}:
\begin{equation}
\begin{aligned}
&F_k^d \coloneqq  \Big\{ f \coloneqq [-1, 1]^d \to \mathbb{R}  \big|\ D^{\bm{\alpha}}f \text{ continuous if } \alpha_i \leq k \text{ for all } i,\\
&\hspace{5cm}\bm{\alpha}\coloneqq(\alpha_1,...,\alpha_d)\in \mathbb{N}_0^d \Big\}
\end{aligned}
\end{equation}
equipped with the norm:
\begin{equation}
\|f\|_{k, \infty} = \max\left\{ \left\|D^{\alpha}u\right\|_{\infty} \ \middle|\ \alpha \in \mathbb{N}_0^d,\ \alpha_i \leq k \right\}.
\end{equation}

Our adaptive RBF approach approximates the solution $u(\bm{x}, t)$ to Eq.~\eqref{model_problem} using spatial radial basis functions with time-varying coefficients, scales, and centers:
\begin{equation}
    u(\bm{x}, t)\approx u_n(\bm{x}, t)= \sum_{i=1}^N c_i(t)B\big(\bm{\epsilon}_i(t)*(\bm{x}-\bm{x}_i(t))\big), \,\, \bm{x},\bm{x}_i\in\Omega\subseteq \mathbb{R}^d,\,\, t\in\mathbb{R}^+.
    \label{spatiotemporal_approx}
\end{equation}
In Eq.~\eqref{spatiotemporal_approx}, $*$ denotes the element-wise Hadamard product. The vectors $\bm{\epsilon}_i(t)\coloneqq (\epsilon_{i,1}(t),\ldots,\epsilon_{i,d}(t))$ and $\bm{x}_i(t)\coloneqq (x_{i,1}(t),\ldots,x_{i,d}(t))$ represent the multidimensional, heterogeneous scales and centers (displacements) of the RBFs, respectively. Throughout this manuscript, we adopt the multivariate Gaussian kernel as the RBF:
\begin{equation}
    B\big(\bm{\epsilon}_i(t)*(\bm{x}-\bm{x}_i(t))\big)\coloneqq B_{\bm{\epsilon}_i, \bm{x}_i}(\bm{x})=\prod_{i=1}^d\Big(\frac{1}{\sqrt{\pi\epsilon_i^{-2}}}\cdot \exp\big(-\epsilon_i^2{x}_i^2\big)\Big). 
\end{equation}
In this manuscript, $\|\cdot\|$ refers to the $l^2$ norm of a vector or the $L^2$ norm of a function. 

\subsection{Approximation error of multivariate RBF in space}
In this subsection, we analyze the approximation error associated with the spatial radial basis function approximation \eqref{spatiotemporal_approx} for a fixed time $t$. Drawing on ideas from sparse grid methods and hyperbolic cross spaces in spectral approximation theory \cite{shen2010sparse,barthelmann2000high}, we show that the use of spatial RBF approximations can partially alleviate the curse of dimensionality when the solution exhibits spatial anisotropy. For simplicity, we restrict our attention to the case where the spatial domain in \eqref{spatiotemporal_approx} is given by $\Omega=[-1, 1]^d$ in Eq.~\eqref{spatiotemporal_approx} for our theoretical analysis. Under this setting, we establish the following result.

\begin{theorem}
\label{theorem1}
\rm
Let $\bm{\epsilon}^{-1} \coloneqq (\epsilon_1^{-1}, \ldots, \epsilon_d^{-1})$ with $\epsilon_i < 1$ for $i = 1, \ldots, d$. 
For a function $u(\bm{x}) \in F_k^d$, $\bm{x} \in \Omega$, we assume that $u(\bm{x})$ vanishes on the boundary $\partial \Omega$. 
Under this assumption, there exists a continuously differentiable extension $\tilde{u}$ of $u$ to $\mathbb{R}^d$ such that
$u(\bm{x}) \equiv 0$ for $\bm{x} \notin [-1,1]^d$, and
$\tilde{u}(\bm{x}) = u(\bm{x}), \forall\bm{x} \in \Omega$.
Then, given $N$ radial basis functions, there exists an RBF approximation of the form:
\begin{equation}
    u_N(\bm{x})=\sum_{i=1}^N c_iB_{\bm{\epsilon}_i^{-1}, \bm{x}_i}(\bm{x})
\end{equation}
satisfying:
\begin{equation}
\begin{aligned}
    \|u(\bm{x})-u_N(\bm{x})\|&\leq \bigg(\sum_{i=1}^d\epsilon_i^{\frac{1}{2}}\|\partial_{x_i}u\|_{\infty, 0} + 2\|u\|_{\infty, 0}\big(1-\Phi(\bm\epsilon^{-\frac{1}{2}})\big)\bigg) \\
    &\quad+ \prod_{i=1}^d\epsilon_i^{-k}2^{d+2k}c_{d, k}N^{-\frac{k}{2}}(\log N)^{(k+2)(d-1)+1}\|u\|_{\infty, k}\|B_{\bm{1}, \bm{0}}\|_{\infty, k},
    \end{aligned}
    \label{error_bound}
\end{equation}
where $c_{d, k}$ is a constant depending on the dimensionality $d$ and $k$, $\bm{1}=(1,...,1)\in\mathbb{R}^d, \bm{0}=(0,...,0)\in\mathbb{R}^d$, and 
\begin{equation}
    \Phi(\bm\epsilon^{-\frac{1}{2}})\coloneqq \prod_{i=1}^d\int_{|{x}_i|\leq\epsilon_i^{-\frac{1}{2}}}\frac{1}{\sqrt{2\pi}}\exp(-\frac{x_i^2}{2})\d x_i.
\end{equation}
\end{theorem}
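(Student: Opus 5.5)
The approach is to split the error into two parts. The first is a mollification error: convolve $\tilde u$ with a Gaussian at scale $\bm\epsilon$. The second is a quadrature/interpolation error: discretize the convolution integral with $N$ nodes on a sparse grid (Smolyak type) and invoke the sparse grid error estimate of \cite{barthelmann2000high} for functions in $F_k^d$.

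Concretely, write $B_{\bm\epsilon^{-1},\bm y}(\bm x)=\prod_i (\pi\epsilon_i^2)^{-1/2}\exp(-(x_i-y_i)^2/\epsilon_i^2)$, which is a normalized Gaussian density of variance $\epsilon_i^2/2$ in each coordinate. Define the mollified function
\begin{equation*}
v(\bm x)\coloneqq \int_{\mathbb{R}^d}\tilde u(\bm y)B_{\bm\epsilon^{-1},\bm y}(\bm x)\,\d\bm y=\int_{[-1,1]^d} u(\bm y)B_{\bm\epsilon^{-1},\bm y}(\bm x)\,\d\bm y,
\end{equation*}
using that $\tilde u$ vanishes outside $\Omega$. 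Then $\|u-u_N\|\le\|u-v\|+\|v-u_N\|$.

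\textbf{Mollification error.} Substitute $\bm y=\bm x+\bm z$, so that $v(\bm x)-u(\bm x)=\int(\tilde u(\bm x+\bm z)-\tilde u(\bm x))\rho_{\bm\epsilon}(\bm z)\,\d\bm z$ with $\rho_{\bm\epsilon}$ the Gaussian density. Split the $\bm z$-domain into the box $|z_i|\le \epsilon_i^{1/2}$ and its complement.
\begin{itemize}
\item On the box, the mean value theorem gives $|\tilde u(\bm x+\bm z)-\tilde u(\bm x)|\le\sum_i|z_i|\,\|\partial_{x_i}u\|_\infty\le\sum_i\epsilon_i^{1/2}\|\partial_{x_i}u\|_\infty$. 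Here the $C^1$ extension is used: the zero boundary values make $\tilde u$ Lipschitz with the same derivative bounds.
\item Off the box, bound the integrand by $2\|u\|_\infty$ times the Gaussian mass outside the box. After rescaling $z_i=\epsilon_i w_i/\sqrt2$, this mass is $1-\prod_i P(|w_i|\le \sqrt2\,\epsilon_i^{-1/2})\le 1-\Phi(\bm\epsilon^{-1/2})$.
\end{itemize}
This yields the first bracket of \eqref{error_bound}, after passing from the sup bound to the $L^2$ norm on $\Omega$ and absorbing the constant $|\Omega|^{1/2}$, as the stated bound implicitly does.

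\textbf{Discretization error.} Map $[-1,1]^d$ to itself and apply the Smolyak sparse grid quadrature with $N$ nodes $\bm x_i$ and weights $w_i$ to the integrand $g_{\bm x}(\bm y)=u(\bm y)B_{\bm\epsilon^{-1},\bm y}(\bm x)$. Set $c_i=w_iu(\bm x_i)$, so $u_N=\sum_i c_iB_{\bm\epsilon^{-1},\bm x_i}$ has the required form. The sparse-grid estimate from \cite{barthelmann2000high} gives
\begin{equation*}
|v(\bm x)-u_N(\bm x)|\le c_{d,k}N^{-k}(\log N)^{(k+2)(d-1)+1}\|g_{\bm x}\|_{k,\infty}.
\end{equation*}
The paper states $N^{-k/2}$, which is weaker and therefore also fine. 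Next, bound $\|g_{\bm x}\|_{k,\infty}$ with the Leibniz rule. Each $D^{\bm\alpha}$ with $\alpha_i\le k$ splits into at most $2^{k}$ terms per coordinate, giving the factor $2^{dk}$ (up to the stated $2^{d+2k}$ constant bookkeeping). Derivatives of $u$ are bounded by $\|u\|_{\infty,k}$. Derivatives of the Gaussian, by the chain rule, are $\prod_i\epsilon_i^{-\alpha_i}$ times the corresponding derivative of $B_{\bm 1,\bm 0}$, bounded by $\prod_i\epsilon_i^{-k}\|B_{\bm1,\bm0}\|_{\infty,k}$ since $\epsilon_i<1$. The normalizing factor $\epsilon_i^{-1}$ can be absorbed into $c_{d,k}$ and $2^{d}$, or tracked explicitly.

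\textbf{Main obstacle.} The delicate step is matching the constants exactly: reconciling the normalization of $B$ (and the resulting $\epsilon_i^{-1}$ amplitude factors) with the claimed $\prod\epsilon_i^{-k}2^{d+2k}$, and the variance convention with the tail term $1-\Phi(\bm\epsilon^{-1/2})$. I would first verify the Leibniz and chain-rule count carefully in one dimension and then tensorize. Where the constants do not match exactly, the plan is to enlarge them, since only an upper bound is claimed.
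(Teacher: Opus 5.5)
Your route is the paper's route. You mollify with the normalized Gaussian $B_{\bm\epsilon^{-1},\bm0}$, discretize the convolution integral by the Clenshaw--Curtis Smolyak rule with $c_i=w_iu(\bm x_i)$, and apply the Barthelmann--Novak--Ritter sparse-grid estimate together with Leibniz and scaling. Your mollification step is cleaner than the paper's, whose intermediate display carries an unjustified factor $1-\Phi$ on the derivative term. Using the Lipschitz zero extension rather than a $C^1$ one is also the right call.

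The genuine gap is your claim that the normalizing factor can be absorbed into $c_{d,k}$ and $2^d$. Since $B_{\bm\epsilon^{-1},\bm0}(\bm z)=\prod_i(\sqrt{\pi}\,\epsilon_i)^{-1}e^{-z_i^2/\epsilon_i^2}$, one has $\|D^{\bm\alpha}B_{\bm\epsilon^{-1},\bm0}\|_\infty=\prod_i\epsilon_i^{-1-\alpha_i}\|D^{\bm\alpha}B_{\bm1,\bm0}\|_\infty$. For $\bm\alpha=(k,\dots,k)$ the mixed derivative of $g_{\bm x}$ is therefore generically of size $\prod_i\epsilon_i^{-(k+1)}$ for $\bm y$ within $O(\epsilon_i)$ of $\bm x$. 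The extra $\prod_i\epsilon_i^{-1}$ depends on $\bm\epsilon$ and blows up as $\epsilon_i\to0$, so it cannot be hidden in a constant depending only on $d$ and $k$. Tracking it explicitly gives $\prod_i\epsilon_i^{-(k+1)}$, not the stated $\prod_i\epsilon_i^{-k}$. The paper's proof makes exactly the same slip: it asserts $\|B_{\bm\epsilon^{-1},\bm0}\|_{\infty,k}\le\prod_i\epsilon_i^{-k}\|B_{\bm1,\bm0}\|_{\infty,k}$. So you found the weak spot, but ``enlarging constants'' does not close it.

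What does close it is the slack you called harmless, $N^{-k}$ versus $N^{-k/2}$, provided you actually earn $N^{-k}$.
\begin{itemize}
\item \textbf{Earning $N^{-k}$.} Sparse grids exist only with cardinalities $n(q,d)$, so pick $q$ with $n(q,d)\le N<n(q+1,d)$. The bounds $2^{q-1}\le n(q,d)$ and $n(q+1,d)\le (q+1)^{d-1}2^{q+1}$ then give $n(q,d)^{-k}\le C_{d,k}N^{-k}(\log N)^{k(d-1)}$ for $N\ge 3$. The paper extracts only $n(q,d)\ge N^{1/2}/2$, which throws the slack away.
\item \textbf{Case split.} Set $P\coloneqq\prod_i\epsilon_i<1$.
\begin{itemize}
\item If $P\ge N^{-k/2}(\log N)^{k(d-1)}$, then $P^{-(k+1)}N^{-k}(\log N)^{k(d-1)}\le P^{-k}N^{-k/2}$, and the quadrature bound yields the second term.
\item If $P<N^{-k/2}(\log N)^{k(d-1)}$, then $P^{-k}N^{-k/2}(\log N)^{(k+2)(d-1)+1}>P^{1-k}(\log N)^{2d-1}\ge1$. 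Using $\|B_{\bm1,\bm0}\|_{\infty,k}\ge\pi^{-d/2}$ and taking $c_{d,k}\ge\pi^{d/2}$, the second term alone exceeds $2^{d/2}\|u\|_{\infty,0}$. The trivial choice $c_i=0$ then works.
\end{itemize}
\end{itemize}

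Two smaller points remain. First, the first bracket has no free constant into which $|\Omega|^{1/2}=2^{d/2}$ could be absorbed. What your argument proves, and likewise the paper's (which ends with a pointwise estimate), is the inequality with $\|\cdot\|$ read as the sup norm on $\Omega$. To state it in $L^2(\Omega)$ you must put a factor $2^{d/2}$ on that bracket. Second, the Leibniz count $2^{dk}$ versus the stated $2^{d+2k}$ is harmless, since it depends only on $d$ and $k$.
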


The proof of Theorem~\ref{theorem1} is provided in Appendix~\ref{proof_theorem1}. From Eq.~\eqref{error_bound}, if we choose homogeneous scales $\epsilon_i \coloneqq N^{-1/(2d)}$ for all $i$, then the right-hand side is of order $\mathcal{O}\!\left(N^{-1/(4d)}\right)$, which reflects the curse of dimensionality. However, the spatial RBF approximation can partially alleviate this issue when the solution $u$ is spatially anisotropic, in the sense that the quantities $\|\partial_{x_i} u\|_{\infty,0}$ vary significantly across different dimensions. 
Such spatially anisotropic functions arise in a wide range of applications. For example, in hydrogeology, they describe groundwater flow that moves readily along permeable layers but much more slowly across clay barriers \cite{gelhar1986stochastic,rubin2003applied}. In seismology, they are used to characterize wave speeds that vary with propagation direction in anisotropic rock formations \cite{thomsen1986weak}.

To exemplify, we consider two cases. First, we assume that there exists a $c>0$ such that $\|\partial_{x_i}u\|_{\infty, 0}\leq \exp(-ci)$. In this case, we can choose $\epsilon_i = N^{-\frac{1}{4i(i+1)}}$. Since the curmulative distribution function $\Phi_1(x)$ of the standard normal distribution $\mathcal{N}(0, 1)$ satisfies $1-\Phi_1(x)<\frac{\exp(-\frac{x^2}{2})}{\sqrt{2\pi}x}$ for $x>0$, we have:
\begin{equation}
    1 - \Phi(\bm\epsilon^{-\frac{1}{2}})\leq \sum_{i=1}^d\frac{2}{\sqrt{2\pi\epsilon_i}}\exp(-\frac{\epsilon_i^{-1}}{2})\leq \frac{2dN^{\frac{1}{8d(d+1)}}}{\sqrt{2\pi}}\exp(-\frac{N^{\frac{1}{4d(d+1)}}}{2}).
\end{equation}
On the other hand, $\prod_{i=1}^d\epsilon_i^{-k}N^{-\frac{k}{2}}\leq N^{-\frac{k}{4}}$. Furthermore, we assume that $N\leq \exp(16c)$. Then, $\epsilon_i^{\frac{1}{2}}\exp(-ci)$ is decreasing in $i$. Therefore, the RHS of Eq.~\eqref{error_bound} can be simplified to:
\begin{equation}
\begin{aligned}
&dN^{-\frac{1}{16}} + 2\|u\|_{\infty, 0}\frac{2dN^{\frac{1}{8d(d+1)}}}{\sqrt{2\pi}}\exp(-\frac{N^{\frac{1}{4d(d+1)}}}{2}) \\
&\quad\quad+ 2^{d+2k}c_{d, k}N^{-\frac{k}{4}}(\log N)^{(k+2)(d-1)+1}\|u\|_{\infty, k}\|B_{\bm 1, \bm0}\|_{\infty, k},
\end{aligned}
\end{equation}
which converges faster to 0 than the usual $\mathcal{O}\!(N^{-\frac{k}{d}})$ as the dimensionality $d$ increases. 

As another case, we consider the scenario in which there exist $d_0 \ll d$ dominant directions in the sense of the spatial derivatives $\partial_{x_i} u$. Specifically, we assume that $\|\partial_{x_i} u\|_{\infty,0} < 1$ for $\quad i = 1,\ldots,d_0$ and $\|\partial_{x_i} u\|_{\infty,0} < \epsilon_0$ for $i = d_0+1,\ldots,d$ with $\epsilon_0 \ll 1$ satisfying $\epsilon_0^{-\frac{d_0}{d}}\exp(-\frac{\epsilon_0^{-\frac{2d_0}{d}}}{2})<\epsilon_0$.
We can set:
\begin{equation}
    \epsilon_i=\epsilon_0^2\cdot\epsilon_0^{\frac{2d_0}{d}}, i\leq d_0,\,\, \epsilon_i=\epsilon_0^{\frac{2d_0}{d}}, i>d_0.
    \label{case2_1}
\end{equation}
Then, we have:
\begin{equation}
    1 - \Phi(\bm\epsilon^{-\frac{1}{2}})\leq \sum_{i=1}^d\frac{2}{\sqrt{2\pi\epsilon_i}}\exp(-\frac{\epsilon_i^{-1}}{2})\leq \frac{2d\epsilon_0^{-\frac{d_0}{d}}}{\sqrt{2\pi}}\exp(-\frac{\epsilon_0^{-\frac{2d_0}{d}}}{2}).
    \label{case2_2}
\end{equation}
Plugging Eqs.~\eqref{case2_1} and \eqref{case2_2} into Eq.~\eqref{error_bound}, the RHS of Eq.~\eqref{error_bound} can be bounded by:
\begin{equation}
\begin{aligned}
        &d\epsilon_0^{\frac{d_0}{d}+1} + 2\|u\|_{\infty, 0}\frac{2d\epsilon_0^{-\frac{d_0}{d}}}{\sqrt{2\pi}}\exp\big(-\frac{\epsilon_0^{-\frac{2d_0}{d}}}{2}\big)\\
        &\quad+\epsilon_0^{-4d_0k}2^{d+2k}c_{d, k}N^{-\frac{k}{2}}(\log N)^{(k+2)(d-1)+1}\|u\|_{\infty, k}\|B_{\bm 1, \bm0}\|_{\infty, k}\\
    &\quad\quad\leq d\epsilon_0 + 2\|u\|_{\infty, 0}\frac{2d\epsilon_0}{\sqrt{2\pi}} \\
    &\hspace{1.5cm}+ \epsilon_0^{-4d_0k}2^{d+2k}c_{d, k}N^{-\frac{k}{2}}(\log N)^{(k+2)(d-1)+1}\|u\|_{\infty, k}\|B_{\bm 1, \bm0}\|_{\infty, k}.
\end{aligned}
    \label{error_bound1}
\end{equation}

Eq.\eqref{error_bound1} indicates that, to achieve an accuracy on the order of $\epsilon_0$-corresponding to the magnitude of the norms of the first-order partial derivatives in the insignificant directions, $\|\partial_{x_i} u\|_{\infty,0}$ for $i > d_0$-it suffices to use $N \sim \epsilon_0^{-8 d_0}$ basis functions, rather than the conventional requirement $N \sim \epsilon_0^{-d}$.


\subsection{Adaptive strategies for dynamically adjusting RBFs}
In this subsection, we present adaptive strategies for dynamically adjusting the centers and scales of the radial basis functions when solving spatiotemporal integrodifferential equations. Given the RBF approximation Eq.~\eqref{spatiotemporal_approx}, it satisfies the following spatiotemporal equation:
\begin{equation}
\begin{aligned}
    &\frac{\partial u_N}{\partial t}= \hat{A}(u_N, \bm{x}, s) \coloneqq \sum_{i=1}^N \partial_tc_i(t)\cdot B_{\bm{\epsilon}(t), \bm{x}_i(t)}(\bm{x}) \\
    &\hspace{2cm}+ \sum_{i=1}^N c_i(t)\cdot \Big(\sum_{j=1}^d\partial_j B_{\bm{\epsilon}_i(t), \bm{x}_i(t)}(\bm{x})\cdot \big(\partial_t\epsilon_{i, j}(t)\cdot (x_j-x_{i, j}(t)) \\
    &\hspace{7cm}- \partial_tx_{i, j}(t)(x_j-x_{i, j}(t))\big)\Big).
    \end{aligned}
    \label{approximate_model}
\end{equation}

We can prove the following result.
\begin{theorem}
\label{theorem2}
\rm
Assume $A$ in Eq.~\eqref{model_problem} is a linear operator and satisfies the following coercivity and linearity conditions: 
\begin{equation}
\begin{aligned}
    \int_{\Omega} u(\bm{x}, t)A(u(\bm{x}, t), \bm{x}, t)\d\bm{x}\leq  L\|u\|^2,\,\, A(u+v, \bm{x}, t) = A(u, \bm{x}, t)+ A(v, \bm{x}, t),
    \end{aligned}
    \label{A_condition}
\end{equation}
where $G$ is a function on $u$ as well as its derivatives on the boundary $\partial\Omega$.  
Then, the following error bound holds:
\begin{equation}
\begin{aligned}
    \big\|u(\bm{x}, t)-u_N(\bm{x}, t)\big\|^2&\leq \exp\big((2L+1)t\big)\cdot \bigg(\int_0^t \|A(u_N,\bm{x}, s)-\hat{A}(u_N, \bm{x}, s)\|^2\d s \\
    &\hspace{4cm} + \|u(\bm{x}, 0) - u_N(\bm{x}, 0)\|^2\bigg).
    \end{aligned}
\end{equation}
\end{theorem}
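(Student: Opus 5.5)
The plan is a standard energy estimate for the error $e(\bm{x},t) \coloneqq u(\bm{x},t)-u_N(\bm{x},t)$, closed with Gr\"onwall's inequality.

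\textbf{Error equation.} Subtract Eq.~\eqref{approximate_model} from Eq.~\eqref{model_problem}. Then add and subtract $A(u_N,\bm{x},t)$ and use the linearity in Eq.~\eqref{A_condition}, which gives $A(u)-A(u_N)=A(e)$. This yields
\begin{equation*}
\partial_t e = A(e,\bm{x},t) + \big(A(u_N,\bm{x},t)-\hat{A}(u_N,\bm{x},t)\big) \eqqcolon A(e,\bm{x},t) + r(\bm{x},t).
\end{equation*}
The residual $r$ is exactly the quantity appearing under the time integral in the claimed bound.

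\textbf{Energy estimate.} Multiply the error equation by $e$ and integrate over $\Omega$, which gives
\begin{equation*}
\tfrac12\tfrac{\d}{\d t}\|e\|^2=\int_\Omega eA(e)\,\d\bm{x}+\int_\Omega e\,r\,\d\bm{x}.
\end{equation*}
Apply the coercivity condition in Eq.~\eqref{A_condition} to $e$ itself, so the first term is at most $L\|e\|^2$. Bound the second term by Cauchy--Schwarz and Young: $\int e\,r\le \tfrac12\|e\|^2+\tfrac12\|r\|^2$. Multiplying by $2$ gives
\begin{equation*}
\tfrac{\d}{\d t}\|e\|^2\le (2L+1)\|e\|^2+\|r\|^2 .
\end{equation*}

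\textbf{Gr\"onwall.} Integrating in integral form gives
\begin{equation*}
\|e(t)\|^2\le e^{(2L+1)t}\|e(0)\|^2+\int_0^t e^{(2L+1)(t-s)}\|r(s)\|^2\,\d s .
\end{equation*}
Since $2L+1\ge 0$ may be assumed (otherwise replace $L$ by $\max(L,-\tfrac12)$), we have $e^{(2L+1)(t-s)}\le e^{(2L+1)t}$. This gives the stated bound.

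\textbf{Main obstacle.} The delicate step is justifying that the coercivity hypothesis applies to $e$. The condition is stated for the solution $u$, but $e$ need not vanish on $\partial\Omega$, because the Gaussians do not satisfy the homogeneous Dirichlet condition. The remark about the boundary function $G$ suggests the intended hypothesis holds for all sufficiently smooth functions, up to boundary terms. I would interpret Eq.~\eqref{A_condition} as holding for every admissible function, including $e$, so that any boundary contributions are absorbed into the assumption. I would also note that the regularity $u\in C^1([0,T],F_k^d)$, together with smoothness of $u_N$ in $t$, makes $\|e\|^2$ differentiable, which justifies the differentiation step.
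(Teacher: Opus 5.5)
Your proof is essentially the paper's: it likewise differentiates $\|u-u_N\|^2$, uses linearity to split $A(u)-\hat A(u_N)$ into $A(u)-A(u_N)$ plus the residual $A(u_N)-\hat A(u_N)$, applies coercivity to $u-u_N$ and Cauchy--Schwarz/Young to reach $(2L+1)\|u-u_N\|^2+\|A(u_N)-\hat A(u_N)\|^2$, and closes with Gr\"onwall. Both caveats you raise are genuine, and the paper passes over them silently---it applies coercivity to $u-u_N$, which need not vanish on $\partial\Omega$, and its final step needs $2L+1\ge 0$ (for $L<-\tfrac12$ the stated bound can really fail, e.g.\ for $A=L\,\mathrm{Id}$ with $u_0=0$ and a single Gaussian, so passing to $\max(L,-\tfrac12)$ corrects the statement rather than being a harmless reduction)---and your derivation gives the stated constant without the stray factor $2$ in the paper's final display.
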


\begin{proof}
    We denote $f(t)\coloneqq\|u(\bm{x}, t) - u_N(\bm{x}, t)\|^2$. We have:
    \begin{equation}
    \begin{aligned}
        \partial_t f(t) &= 2(A(u, \bm{x},t) - A(u_N, \bm{x},t), u-u_N) + 2(A(u_N, \bm{x},t) - \hat{A}(u_N, \bm{x},t), u - u_N)\\
        &\quad\leq 2L(u-u_N, u-u_N) +  2\|A(u_N, \bm{x}, t)-\hat{A}(u_N, \bm{x}, t)\|\cdot\|u(\bm{x}, t) - u_N(\bm{x}, t)\|\\
        &\quad\quad\leq (2L+1)\|u-u_N\|^2 + \|A(u_N, \bm{x}, t)-\hat{A}(u_N, \bm{x}, t)\|^2
        \end{aligned}
        \label{model}
    \end{equation}
    In Eq.~\eqref{model}, $(u, v)\coloneqq \int_\Omega u\cdot v\d\bm{x}$. Applying the Gronwall's inequality to Eq.~\eqref{model}, we have:
    \begin{equation}
        \begin{aligned}
        \|u(\bm{x}, t)-u_N(\bm{x}, t)\|^2\leq \exp\big((2L+1)t\big)\cdot \bigg(2 \int_0^t 
         \|A(u_N, \bm{x}, s)-\hat{A}(u_N, \bm{x}, s)\|^2\d\bm{s} \\+ \|u(\bm{x}, 0) - u_N(\bm{x}, 0)\|^2\bigg),
        \end{aligned}
        \label{temporal_error_bound}
    \end{equation}
    which proves Theorem~\ref{theorem2}.
\end{proof}

From inequality~\eqref{temporal_error_bound}, minimizing the right-hand side is a necessary condition for ensuring that the $L^2$ error between $u(\bm{x},t)$ and its approximation $u_N(\bm{x},t)$ remains small. If $A\!\left(u_N(\bm{x},t), \bm{x}, t\right) \in F_k^d$ for all $t$, then even when the scales and centers of the RBFs are time-invariant, i.e.,
\[
\bm{\epsilon}_j(t) \equiv \bm{\epsilon}=(\epsilon_1,\ldots,\epsilon_d)
\quad \text{and} \quad
\bm{x}_j(t) \equiv \bm{x}_j(0),
\]
Theorem~\ref{theorem1} implies that
\begin{equation}
\begin{aligned}
        &\min_{\hat{A}, u_N(\bm{x}, 0)} \bigg(2 \int_0^T  \|A(u_N, \bm{x}, t)-\hat{A}(u_N, \bm{x}, t)\|^2\d t + \|u(\bm{x}, 0) - u_N(\bm{x}, 0)\|^2\bigg) \\
 & \leq 2 \bigg(\sum_{i=1}^d\epsilon_i^{\frac{1}{2}}\cdot \big(\int_0^T\|\partial_{x_i}A(u_N, \bm x, t)\|_{\infty, 0}\d t + \|\partial_{x_i}u_0(\bm{x})\|_{\infty, 0}\big) \\
 &\quad+ 2\big(\int_0^T\|A(u_N, \bm x, t)\|_{\infty, 0}\text{d}t + \|u_0(\bm{x})\|\big)\cdot\big(1-\Phi(\bm\epsilon^{-\frac{1}{2}})\big)\bigg) \\
    &\quad\quad+ \prod_{i=1}^d\epsilon_i^{-k}2^{d+2k}c_{d, k}N^{-\frac{k}{2}}(\log N)^{(k+2)(d-1)+1}\|B_{\bm{1}, \bm{0}}\|_{\infty, k},
\end{aligned}
\label{time_error}
\end{equation}
suggesting that if $A(u_N, \bm{x}, t)$ and $u_N(\bm{x}, 0)$ are also spatially anisotropic, the ``curse of dimensionality" might be partially alleviated.

 On the other hand, Eq.~\eqref{time_error} only provides an upper bound for the error bound 
 \begin{equation}
     \bigg(2 \int_0^T  \|A(u_N, \bm{x}, t)-\hat{A}(u_N, \bm{x}, s)\|^2\d t + \|u(\bm{x}, 0) - u_N(\bm{x}, 0)\|^2\bigg).
 \end{equation}
 Suppose the initial condition $u(\bm{x}, 0)$ can be approximated well using $u_N(\bm{x}, 0)$ by minimizing $\|u(\bm{x}, 0)-u_N(\bm{x}, 0)\|^2$. Then, 
 we can consider adjusting scales and centers $\bm\epsilon_j(t)$ and $\bm x_j(t)$ over time for each RBF so that the first term in the error bound Eq.~\eqref{time_error}:
 \begin{equation}
     \int_0^T  \|A(u_N, \bm{x}, t)-\hat{A}(u_N, \bm{x}, t)\|^2\d t
     \label{minimize_quantity}
 \end{equation}
 can be minimized, leading to a small approximation error $\|u_N(\bm{x}, t)-u(\bm{x}, t)\|^2$. Therefore, our adaptive RBF strategy is to minimize Eq.~\eqref{minimize_quantity} with time-varying coefficients $c_i(t)$, scales $\bm \epsilon_i(t)$, and centers $\bm x_i(t)$ of the RBFs so that the upper bound for the error $\|u(\bm{x}, t)-u_N(\bm{x}, t)\|^2$ can be small. We adopt a neural network (NN), which takes the RBF approximation $u_N(\bm{x}, t)$ and time $t$ as the input and then outputs $\hat{A}$, to approximate the integrodifferential operator $A(u_N, \bm{x}, s)$ in Eq.~\eqref{model_problem}. The structure of the NN we use is described in Fig.~\ref{fig:snn}.
    \begin{figure}
    \centering
\includegraphics[width=0.9\linewidth]{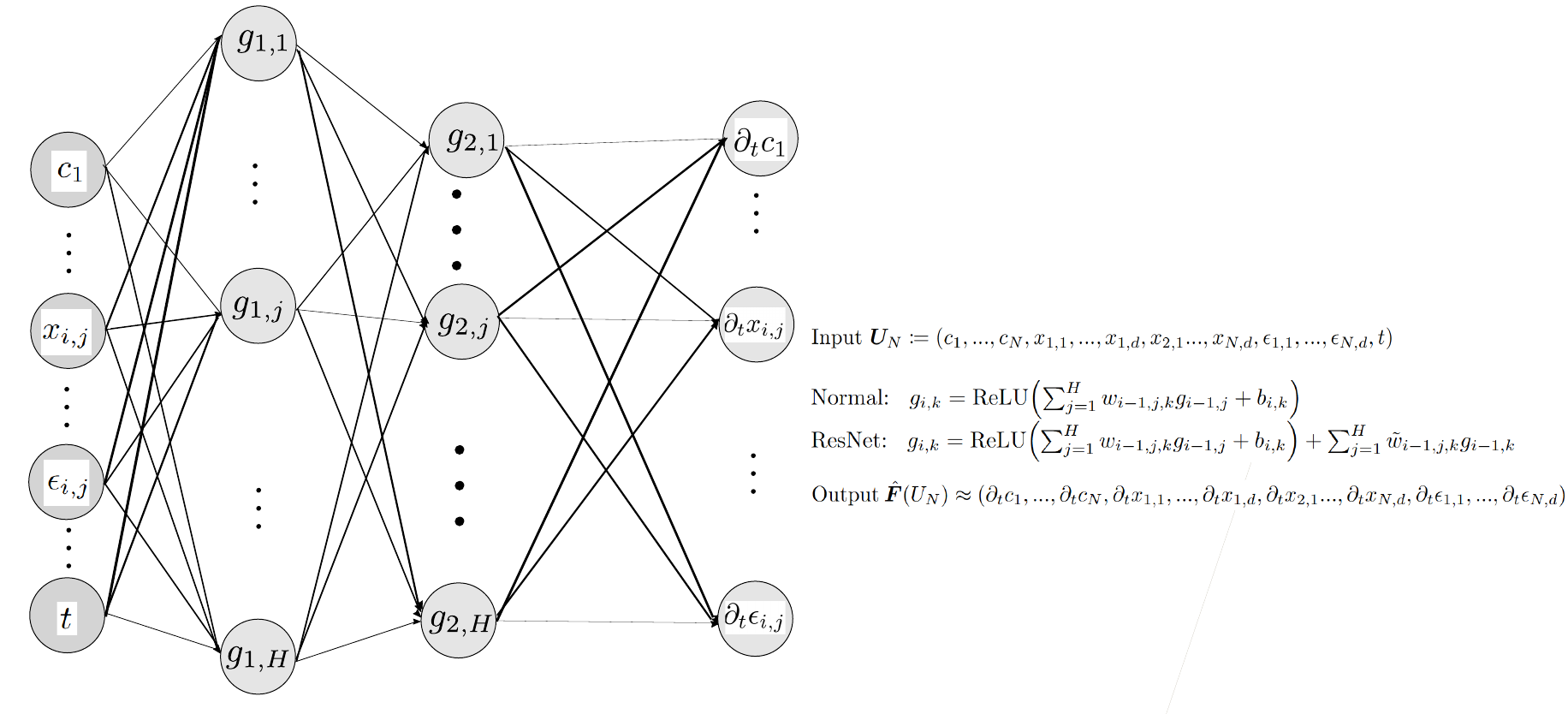}
    \caption{The structure of the NN, as the neural ODE model, used in this work. In the NN model, the coefficients $c_i(t)$, centers $\bm{x}_i(t)$, and scales of the RBFs $\bm{\epsilon}_i(t)$ in Eq.~\eqref{spatiotemporal_approx} as well as time $t$ are inputs of the NN. The output is a vector standing for the temporal derivatives $\partial_t c_i(t)$, $\partial_t \bm{x}_i(t)=(\partial_t x_{i, 1}(t),...,\partial_t x_{i, d}(t))$, and $\partial_t\bm{\epsilon}_i(t)=(\partial_t \epsilon_{i, 1}(t),...,\partial_t \epsilon_{i, d}(t))$ for $i=1,...,N$.
    Either the normal feedforward structure or the ResNet \cite{he2016deep} structure is used for forward propagation.}
    \label{fig:snn}
\end{figure}

\textbf{Remark:} the condition Eq.~\eqref{A_condition} can be met by various types of integrodifferential operators $A$. For example, if $A$ is the Laplace operator, then with the homogeneous Dirichlet boundary condition in $u$:
\begin{equation}
\begin{aligned}
    \int_{\Omega} u(\bm{x}, t)A(u(\bm{x}, t), \bm{x}, t)\d\bm{x} &= \int_{\Omega} -\nabla u(\bm{x}, t)\cdot \nabla u(\bm{x}, t)\d\bm{x} + \int_{\partial\Omega}u(\bm{x}, t)\frac{\partial u(\bm{x}, t)}{\partial \nu}\d\bm{x}\\
    &\quad =\int_{\Omega} -\nabla u(\bm{x}, t)\cdot \nabla u(\bm{x}, t)\d\bm{x}\leq 0,
    \end{aligned}
\end{equation}
which satisfies Eq.~\eqref{A_condition}. For simplicity, we assume that $A$ in Eq.~\eqref{model_problem} is linear.
Extensions to more complicated nonlinear scenarios, such as analysis on the cases in which the operator $A$ in Eq.~\eqref{model_problem} is nonlinear \cite{deng2025adaptive}, are left for future work.

\section{Numerical experiments}
\label{section3}
In this section, we present a series of numerical experiments to evaluate the effectiveness of the proposed adaptive RBF method for solving multidimensional spatiotemporal integrodifferential equations, for which it is complicated and computationally expensive to apply traditional mesh-based approaches. All numerical experiments are implemented in Python~3.11 and performed on a desktop workstation equipped with a 32-core Intel\textsuperscript{\textregistered} i9-13900KF CPU. Given the initial condition, we obtain an initial RBF approximation by minimizing the quantity $\|u_N(\bm{x},0) - u(\bm{x},0)\|$.

In the following examples, to solve the multidimensional spatiotemporal integrodifferential equation~\eqref{model_problem} subject to non-homogeneous Dirichlet boundary conditions, we train the NN shown in Fig.~\ref{fig:snn} by minimizing the following loss function:
\begin{equation}
\begin{aligned}
    \text{Loss} &= \sum_{j=1}^{\frac{T}{\Delta t}}\bigg(\frac{1}{N_0}\sum_{i=1}^{N_0} (A(u_N, \bm x_i, t_j) - \hat{A}(u_N, \bm x_i, t_j))^2\\
    &\hspace{2cm}+\frac{1}{N_1}\sum_{i=1}^{N_1} \big(u(\bm y_i, t_j) - u_N(\bm y_i, t_j))^2\bigg)\Delta t,
    \end{aligned}
    \label{loss_rbf}
\end{equation}
where $\bm{x}_i \in \Omega$ and $\bm{y}_i \in \partial \Omega$ are randomly resampled at each training epoch. The operators $A$ and $\hat{A}$ denote the ground-truth and approximate integrodifferential operators in Eqs.~\eqref{model_problem} and \eqref{approximate_model}, respectively, and $\Delta t$ is the time step. A pseudocode description of the proposed algorithm is provided in Algorithm~\ref{algorithm_1}. The settings and hyperparameters used in all numerical examples are summarized in Table~\ref{tab:setting} in Appendix~\ref{training_details}. For all experiments, we employ the \texttt{torchdiffeq} package in \texttt{PyTorch} \cite{chen2018neural} to numerically solve the ODEs governing the coefficients $c_i(t)$, scales $\bm{\epsilon}_i(t)$, and centers $\bm{x}_i(t)$ in the RBF approximation model~\eqref{approximate_model}.

\begin{algorithm}
\footnotesize
\caption{\footnotesize Pseudocode of the adaptive RBF approach for numerically solving multidimensional spatiotemporal equations.}
\begin{algorithmic}
  \STATE \textbf{Input:} initial condition $u(\bm{x},0)$, time step $\Delta t$, number of time steps $M$, and the number of training epochs $\text{epoch}_{\max}$.
  \STATE Initialize the neural ODE model shown in Fig.~\ref{fig:snn}.
  \STATE Construct an initial RBF approximation $u_N(\bm{x},0)$ of $u(\bm{x},0)$, and record the corresponding coefficients $c_i(0)$, centers $\bm{x}_i(0)$, and scales $\bm{\epsilon}_i(0)$.
  \FOR{$j = 1,\ldots,\text{epoch}_{\max}$}
    \STATE Use the \texttt{odeint} function with the approximate operator $\hat{A}(u_N,\bm{x},t)$ to compute $u_N(\bm{x},t_j)$ for $j = 1,\ldots,M$.
    \STATE Randomly sample $N_0$ interior points $\{\bm{x}_i\}_{i=1}^{N_0}$ and $N_1$ boundary points $\{\bm{y}_i\}_{i=1}^{N_1}$ for evaluating the loss function in Eq.~\eqref{loss_rbf}.
    \STATE Evaluate the loss function defined in Eq.~\eqref{loss_rbf}.
    \STATE Update the parameters of the neural ODE model by minimizing the loss function using gradient descent.
  \ENDFOR
  \STATE \textbf{Output:} the trained neural ODE model in Eq.~\eqref{approximate_model} and the numerical solutions $\{u_N(\bm{x},t_j)\}_{j=1}^{M}$.
\end{algorithmic}
\label{algorithm_1}
\end{algorithm}

A key advantage of the adaptive RBF approach over existing adaptive spectral methods is that the spatial domain $\Omega$ in Eq.~\eqref{spatiotemporal_approx} is not restricted to box-shaped domains or the unbounded space $\mathbb{R}^d$, where spectral basis functions can be constructed explicitly. Consequently, the proposed adaptive RBF method offers significantly greater flexibility for problems posed on arbitrary spatial domains. To illustrate this capability, we first consider a multidimensional spatiotemporal equation defined on an irregular domain.

\begin{example}
\label{example1}
    \rm
We numerically solve the following convection-diffusion equation: 
\begin{equation}
\begin{aligned}
    &\partial_tu(\bm{x}, t) = \frac{1}{2}\sum_{i=1}^d \partial_{x_ix_i}^2u - \sum_{i=1}^d b_i \partial_{x_i}u \\
    &\quad+ \sum_{i=1}^d \big(\frac{x_i-b_it}{t+a_i} + b_i\big)\exp\Big(- \frac{(x_i - b_it)^2}{2(t+a_i)}\Big) \cos(x_i) \\
    &\quad\quad+ \sum_{i=1}^d(\frac{1}{2(t+a_i)}+\frac{1}{2})\exp\Big(- \frac{(x_i - b_it)^2}{ 2(t+a_i)}\Big) \sin(x_i),\,\, \bm{x}\in\Omega, t\in[0, T],
    \end{aligned}
    \label{example1_model}
\end{equation}
with the initial condition:
\begin{equation}
    u(\bm{x}, 0) = \sum_{i=1}^d\exp\big(- \frac{x_i^2}{ 2a_i}\big)\cdot\sin(x_i)
    \label{example1_ic}
\end{equation}
and the Dirichlet boundary condition:
\begin{equation}
    u(\bm{x}, t) = \sum_{i=1}^d\exp\big(- \frac{(x_i - b_it)^2}{ 2(t+a_i)}\big)\cdot\sin(x_i), \,\, \bm{x}\in\partial\Omega.
    \label{example1_bc}
\end{equation}
The spatial domain $\Omega \coloneqq \Big\{(x_1,...,x_d):\sum_{i=1}^dx_i=0, x_i\in\big[-\frac{3}{d}, 3 - \frac{3}{d}\big]\Big\}$.  
Eq.~\eqref{example1_model}, together with the initial condition Eq.~\eqref{example1_ic} and Eq.~\eqref{example1_bc}, admits an analytical solution:
\begin{equation}
    u(\bm{x}, t) = \sum_{i=1}^d\exp\big(- \frac{(x_i - b_it)^2}{ 2(t+a_i)}\big)\cdot\sin(x_i).
    \label{example1_analytic}
\end{equation}
We set $a_i = 0.5 + 0.5i$ and $b_i=0.05i$ in Eqs.~\eqref{example1_model},\eqref{example1_ic}, \eqref{example1_bc}, and \eqref{example1_analytic}.
When sampling points on the boundary $\partial\Omega$ for evaluating the loss function Eq.~\eqref{loss_rbf}, for each boundary point, we use the following strategy: i) sample a surface out of the $d+1$ surfaces of $\Omega$ with equal probability  ($\frac{1}{d+1}$), and ii) after randomly selecting the surface, each point on the surface has equal probability density with respect to surface area. The error used in this example refers to the squared $L^2$ error to measure the accuracy of the numerical solution:
\begin{equation}
    \text{Error at time~} t\coloneqq \frac{\|u_N(\bm{x}, t) - u(\bm{x}, t)\|^2}{\|u(\bm{x}, t)\|^2}.
\end{equation}

    \begin{figure}
    \centering
\includegraphics[width=0.9\linewidth]{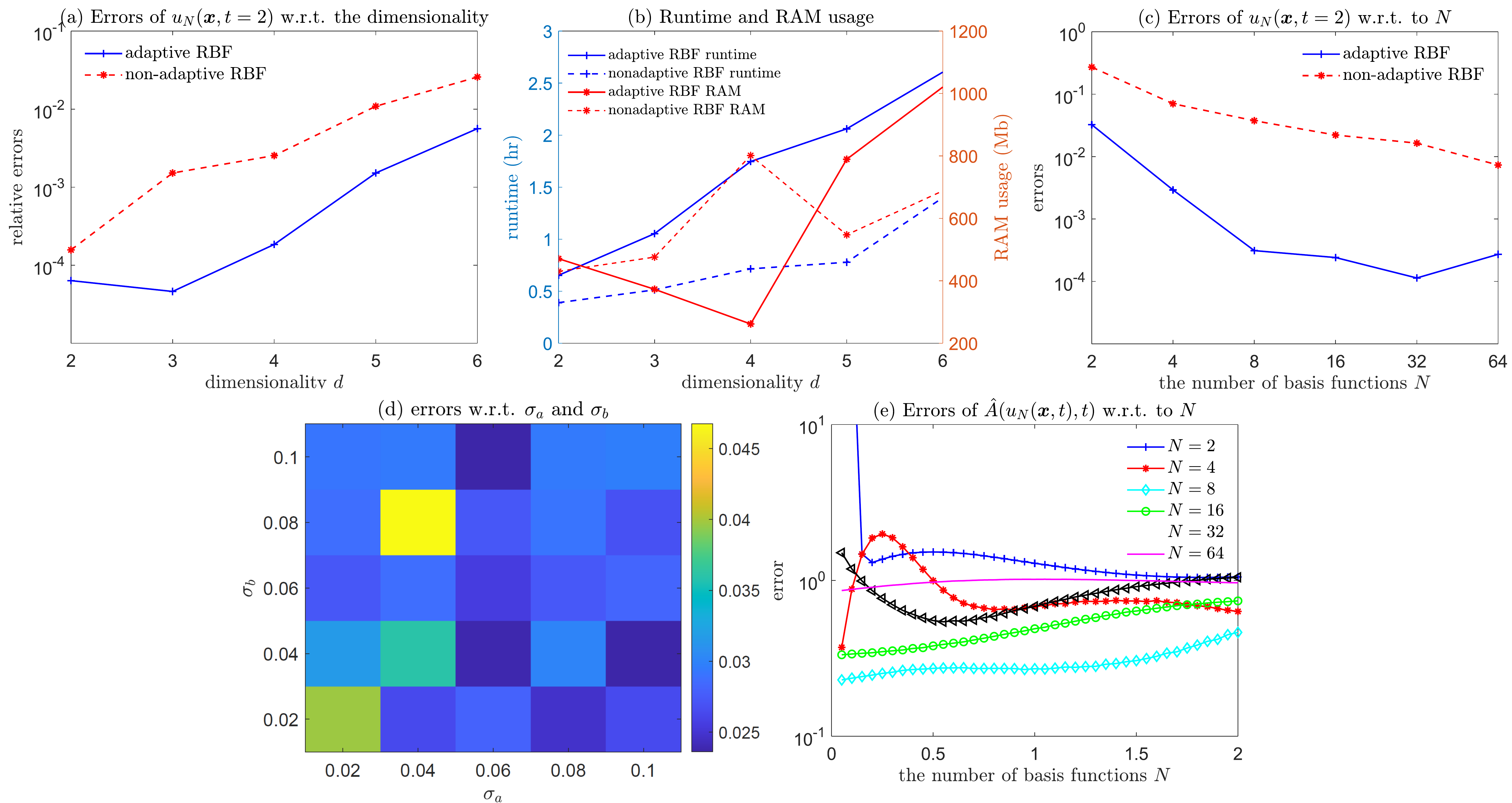}
    \caption{(a)(b) errors and computational costs of adaptive RBF approach versus non-adaptive RBF approach (with fixed centers and shapes over time) with different dimensionality $d$. The number of basis functions $N=30$. (c) errors and computational costs of the adaptive RBF approach versus the non-adaptive RBF approach with different numbers of basis functions, with the dimensionality $d=3$ and the number of basis functions $N=32$. (d) errors w.r.t. the noise $\sigma_a, \sigma_b$ in the model parameters $a_i, b_i, i=1,2,3$. $N=32$ and $d=3$. (e) errors when solving the inverse problem of inferring Eq.~\eqref{inverse_learn}, which is the RHS of Eq.~\eqref{example1_model} from observed spatiotemporal data. The dimensionality $d=3$, and the coefficients $a_i = 0.5 + 0.5i$ and $b_i=0.05i$. The error refers to the relative error $\frac{\int_0^T\|A(u_N(\bm{x}, t), t)-\hat{A}(u_N(\bm{x}, t), t)\|^2\d t}{\int_0^T\|A(u_N(\bm{x}, t), t)\|^2\d t}$, where $A$ refers to the RHS of Eq.~\eqref{example1_model}.}
    \label{fig:example1}
\end{figure}

Specifically, we compare with a non-adaptive RBF approach, \textit{i.e.}, using an RBF approximation in which the centers and scales of the basis functions do not change over time. From Fig.~\ref{fig:example1} (a), adaptively adjusting the scales and centers of the RBFs over time greatly improves the accuracy of numerically solving Eq.~\eqref{example1_model}, compared to using RBFs with fixed centers and scales. 
Yet, evolving the centers and scales of the RBFs also leads to a moderate increase in the computational time and RAM usage (shown in Fig.~\ref{fig:example1} (b). Although the error of our adaptive RBF approach increases with dimensionality $d$, this increase is moderate, and the magnitude of the error remains less than 2 when the dimensionality increases from 2 to 6, indicating that our adaptive RBF has the potential to efficiently solve multidimensional spatiotemporal equations. When the number of basis functions is increased from 2 to 32, the error of our adaptive RBF approach is decreased, as shown in Fig.~\ref{fig:example1} (c). Furthermore, given the same number of basis functions, our adaptive RBF always outperforms the non-adaptive RBF counterpart. However, when the number of basis functions is increased from 32 to 64, the error does not decrease, suggesting that some basis functions are redundant or not allocated optimally.

Our adaptive RBF approach can also be extended to the simultaneous solution of a family of spatiotemporal equations. Rather than fixing the parameters $a_i$ and $b_i$, we generate 50 groups of independently sampled parameters
$a_i \sim \mathcal{N}(0.5 + 0.5 i, \sigma_a^2), \,\, 
b_i \sim \mathcal{N}(0.05 i, \sigma_b^2), \,\, i = 1, 2, 3$.
For each parameter set $(a_1,a_2,a_3,b_1,b_2,b_3)$, the corresponding initial condition~\eqref{example1_ic} and boundary condition~\eqref{example1_bc} are generated accordingly. In this setting, we simultaneously solve 50 instances of Eq.~\eqref{example1_model} with different realizations of the model parameters $(a_1,a_2,a_3,b_1,b_2,b_3)$. 
Consequently, in addition to the RBF coefficients $c_i$, centers $x_{i,j}$, and scales $\epsilon_{i,j}$, the parameters $(a_1,a_2,a_3,b_1,b_2,b_3)$ are also provided as inputs to the neural ODE model shown in Fig.~\ref{fig:snn}. The results, presented in Fig.~\ref{fig:example1}(d), demonstrate that the proposed adaptive RBF approach is largely insensitive to the magnitude of parameter uncertainties when applied to the simultaneous solution of a family of spatiotemporal equations.


On the other hand, we can consider the inverse problem of learning the RHS of Eq.~\eqref{example1_model}:
\begin{equation}
\begin{aligned}
&\frac{1}{2}\sum_{i=1}^d \partial_{x_ix_i}^2u - \sum_{i=1}^d b_i \partial_{x_i}u 
    + \sum_{i=1}^d \big(\frac{x_i-b_it}{t+a_i} + b_i\big)\exp\Big(- \frac{(x_i - b_it)^2}{2(t+a_i)}\Big) \cos(x_i) \\
 &\hspace{2cm}   + \sum_{i=1}^d\big(\frac{1}{2(t+a_i)}+\frac{1}{2}\big)\cdot\exp\Big(- \frac{(x_i - b_it)^2}{ 2(t+a_i)}\Big) \sin(x_i)
    \end{aligned}
    \label{inverse_learn}
\end{equation}
using the approximate $\hat{A}$ in Eq.~\eqref{approximate_model} from the observed spatiotemporal data $u(\bm{x}, t)$. When $u(\bm{x}, 0)$ can be well approximated by $u_N(\bm{x}, 0)$ and $u(\bm{x}, t)$ can be well approximated by $u_N(\bm{x}, t)$ for $\bm x\in\partial\Omega$, keeping a small $\int_0^T \|u(\bm x, t)-u_N(\bm x, t)\|^2\d t$ is a necessary condition such that the RHS of Eq.~\eqref{temporal_error_bound} is small from Theorem~\ref{theorem2}. 
Therefore, we minimize the following loss function to train the NN in Fig.~\ref{fig:snn} to approximate $\hat{A}$:
\begin{equation}
    \text{Loss} = \sum_{j=1}^{\frac{T}{\Delta t}}\sum_{i=1}^{N_0} \|u_N(\bm x_i, t_j) - u(\bm x_i, t_j)\|^2\Delta t.
    \label{loss_rbf_inverse}
\end{equation}

A pseudocode description of the proposed adaptive RBF approach for learning a general integrodifferential operator $A$ in Eq.~\eqref{model_problem} is provided in Appendix~\ref{pseudocode_inverse}, which closely parallels Algorithm~\ref{algorithm_1}. As shown in Fig.~\ref{fig:example1}(e), learning the spatial differential operator on the right-hand side of Eq.~\eqref{example1_model} from data is more challenging than directly solving Eq.~\eqref{example1_model}, as the error of the learned spatial differential operator remains on the order of $\mathcal{O}(10^{-1})$.

Finally, as an additional numerical study, we examine how the neural network architecture-the number of hidden layers, the number of neurons per layer, and the use of a ResNet feedforward structure affects the accuracy of solving Eq.~\eqref{example1_model}. The detailed results are presented in Appendix~\ref{example1_appendix}. Our results indicate that employing too few layers or too few neurons per layer leads to degraded accuracy, while incorporating the ResNet architecture improves performance as the network depth increases. Further investigation is warranted to determine how deeper neural networks with more sophisticated architectures can further enhance the solution of multidimensional spatiotemporal equations.


\end{example}
Next, we consider numerically solving a nonlinear spatiotemporal differential equation to further examine the effectiveness of our adaptive RBF approach.

\begin{example}
\rm
\label{example2}
We solve a nonlinear 2D Burgers' equation in \cite{gao2017analytical}:
\begin{equation}
\begin{aligned}
    &\frac{\partial u}{\partial t} + u \frac{\partial u}{\partial x_1} + v \frac{\partial u}{\partial x_2} - \mu \left( \frac{\partial^2 u}{\partial x^2_1} + \frac{\partial^2 u}{\partial x_2^2} \right) = 0, \quad\\
    &\frac{\partial v}{\partial t} + u \frac{\partial v}{\partial x_1} + v \frac{\partial v}{\partial x_2} - \mu \left( \frac{\partial^2 v}{\partial x^2_1} + \frac{\partial^2 v}{\partial x_2^2} \right) = 0, \,\, \bm x=(x_1, x_2)\in(0, 1), \,\, t\in[0, T], 
    \end{aligned}
    \label{burgers_equation}
\end{equation}
where $\mu^{-1}$ is the Reynolds number. The initial conditions and boundary conditions are:
\begin{equation}
\begin{aligned}
    &u(x, y, 0)  = \sin(\pi x) \cos(\pi y),\,\,
    v(x, y, 0)  = \cos(\pi x) \sin(\pi y),\\
    &u(0, y, t) = u(1, y, t) = v(x, 0, t) = v(x, 1, t) = 0,\\
    u(x_1, x_2, t) &= \frac{2\pi\mu \displaystyle\sum_{n=0}^{\infty} \sum_{m=0}^{\infty} n A_{mn} C_{nm} E_{mn}(t) \sin(n\pi x_1) (1-2x_2)^m}
{\displaystyle\sum_{n=0}^{\infty} \sum_{m=0}^{\infty} A_{mn} C_{nm} E_{mn}(t) \cos(n\pi x_1)(1-2x_2)^m}, \,\,x_2=0, 1 \\
v(x_1, x_2, t) &= \frac{2\pi\mu \displaystyle\sum_{n=0}^{\infty} \sum_{m=0}^{\infty} m A_{mn} C_{nm} E_{mn}(t) (1-2x_1)^m \sin(m\pi x_2)}
{\displaystyle\sum_{n=0}^{\infty} \sum_{m=0}^{\infty} A_{mn} C_{nm} E_{mn}(t)(1-2x_1)^m  \cos(m\pi x_2)}, \,\,x_1=0, 1,
    \end{aligned}
    \label{ic_bc}
\end{equation}
where 
\begin{align}
&\hspace{-1cm}C_{mn} = \int_0^1 \int_0^1 \exp\left[2\lambda \cos(\pi x_1) \cos(\pi x_2)\right] \cos(n\pi x_1) \cos(m\pi x_2)  \d x_1  \d x_2,\,\, \lambda = \frac{1}{4\mu\pi},\\
E_{mn}(t) &= \exp\left[ -\left( n^2 + m^2 \right) \pi^2 \mu t \right], \\
A_{mn} &= 
\begin{cases}
1, & \text{if } n = 0 \text{ and } m = 0 \\
2, & \text{if } n = 0 \text{ and } m \neq 0 \\
2, & \text{if } n \neq 0 \text{ and } m = 0 \\
4, & \text{if } n \neq 0 \text{ and } m \neq 0
\end{cases}. 
\end{align}

Under the initial conditions and boundary conditions Eq.~\eqref{ic_bc}, the 2D Burgers' equation admits an analytical solution:
\begin{equation}
\begin{aligned}
u(x_1, x_2, t) &= \frac{2\pi\mu \displaystyle\sum_{n=0}^{\infty} \sum_{m=0}^{\infty} n A_{mn} C_{nm} E_{mn}(t) \sin(n\pi x_1) \cos(m\pi x_2)}
{\displaystyle\sum_{n=0}^{\infty} \sum_{m=0}^{\infty} A_{mn} C_{nm} E_{mn}(t) \cos(n\pi x_1) \cos(m\pi x_2)}, \\
v(x_1, x_2, t) &= \frac{2\pi\mu \displaystyle\sum_{n=0}^{\infty} \sum_{m=0}^{\infty} m A_{mn} C_{nm} E_{mn}(t) \cos(n\pi x_1) \sin(m\pi x_2)}
{\displaystyle\sum_{n=0}^{\infty} \sum_{m=0}^{\infty} A_{mn} C_{nm} E_{mn}(t) \cos(n\pi x_1) \cos(m\pi x_2)}.
\end{aligned}
\label{example2_analytic}
\end{equation}

We set $\mu=0.1$ in Eq.~\eqref{burgers_equation}. From Eq.~\eqref{example2_analytic}, the analytical solution to Eq.~\eqref{burgers_equation} converges to 0 as $t$ increases for all $\bm{x}$. Therefore, we use the time-averaged relative errors:
\begin{equation}
    \text{Error}\coloneqq \frac{\int_0^T \|u(\bm x, t) - u_N(\bm x, t)\|^2\d t }{\int_0^T \|u(\bm x, t)\|^2\d t}.
    \label{time_average_error}
\end{equation}

    \begin{figure}
    \centering
\includegraphics[width=0.9\linewidth]{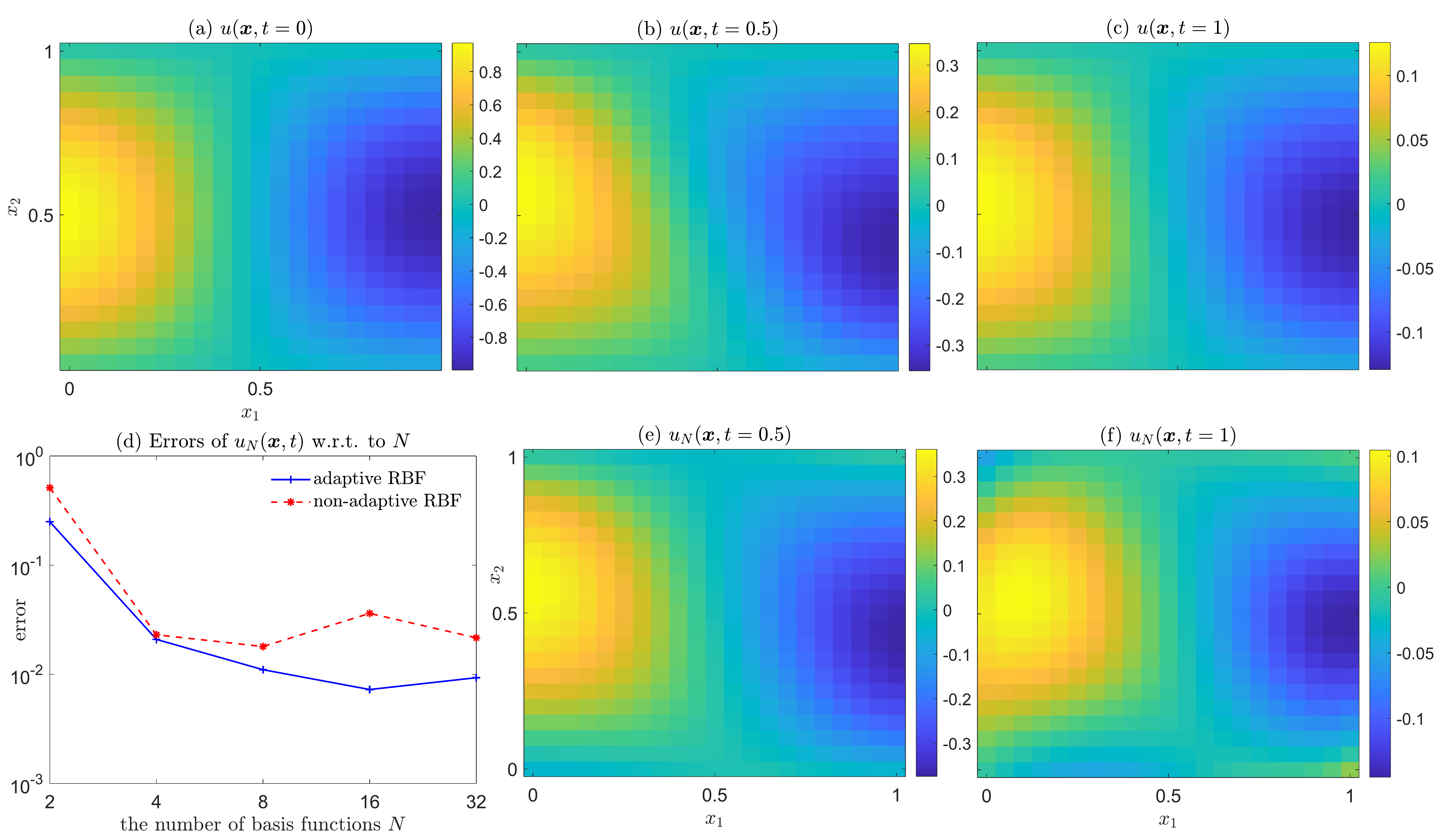}
    \caption{(a)(b)(c) The analytical solution $u(\bm{x}, t)=u(x_1, x_2, t)$ (Eq.~\eqref{example2_analytic}) at $t=0, 0.5, 1$. (d) The time-averaged errors Eq.~\eqref{time_average_error} of the adaptive and non-adaptive RBF approaches with different numbers of basis functions. (e)(f) The adaptive RBF approximation $u_N(\bm{x}, t)=u(x_1, x_2, t)$ at $t=0.5, 1$ with $N=16$.}
    \label{fig:example2}
\end{figure}

We present the initial condition $u(\bm{x},0)$ with $\bm{x}=(x_1,x_2)$, together with the ground-truth solutions $u(\bm{x},0.25)$ and $u(\bm{x},1)$ in Fig.~\ref{fig:example2}(a)–(c). We also display the adaptive RBF approximations $u_N(\bm{x},t)$ at $t=0.25$ and $t=1$ in Fig.~\ref{fig:example2}(e)–(f), which are in close agreement with the corresponding analytical solutions. Furthermore, as shown in Fig.~\ref{fig:example2}(d), increasing the number of basis functions from 2 to 16 leads to a notable improvement in the accuracy of the numerical solution to Eq.~\eqref{burgers_equation}. However, the error obtained with 32 basis functions is comparable to that with 16, indicating that additional basis functions may be redundant. Moreover, Fig.~\ref{fig:example2}(d) demonstrates that a reduction in approximation error with increasing basis functions is achieved only when adaptive techniques are employed. For the non-adaptive RBF method, the approximation error stagnates once the number of basis functions exceeds 4, suggesting that fixed RBFs are unable to effectively adjust their locations over time. Overall, these results highlight the importance of efficiently allocating basis functions in the spatial domain as their number increases, particularly when solving multidimensional spatiotemporal equations.
\end{example}

Finally, we apply the proposed adaptive RBF approach to numerically solve a multidimensional, nonlinear spatiotemporal integrodifferential equation posed on an unbounded domain. Unlike traditional finite difference or finite element methods, which rely on spatial discretization and require elaborate domain truncation techniques when treating problems on unbounded domains, the RBFs we use are inherently defined over the entire $\mathbb{R}^d$. Consequently, the proposed adaptive RBF method can be directly applied to spatiotemporal equations whose spatial variables are defined in unbounded domains. 
\begin{example}
\rm
    \label{example3}
Consider a 4D Vlasov equation characterizing swarming systems of interacting and self-propelled discrete particles in \cite{carrillo2009double,muntean2014collective}:
\begin{equation}
    \begin{aligned}
    &\frac{\partial u}{\partial t}(\bm{x}, \bm{v}, t) + v \cdot \nabla_{\bm x} u + \text{div}_{\bm v}\left[(\alpha - \beta|\bm v|^2)\bm v u\right] \\
    &\quad- (\int_{\mathbb{R}^2}\nabla_{\bm x} U(\bm{y}) \rho(\bm{x}-\bm{y})\d \bm{y})\cdot\nabla_{\bm v} u = 0,\,\,
    \bm{x}, \bm{v}\in\mathbb{R}^2,\,\, t\in[0, T].
    \end{aligned}
    \label{example3_equation}
\end{equation}
In Eq.~\eqref{example3_equation}, $u(\bm{x}, \bm{v}, t)$ denotes the particle density with respect to position $\bm{x} \coloneqq (x_1,x_2)$ and velocity $\bm{v} \coloneqq (v_1,v_2)$ at time $t$. The term $\nabla_{\bm{v}} u$ represents the two-dimensional gradient of $u$ with respect to the velocity variables. Moreover,
\[
\rho(\bm{x},t) \coloneqq \int_{\mathbb{R}^2} u(\bm{x},\bm{v},t)\,\mathrm{d}\bm{v}
\]
denotes the spatial density of particles. We assume that $u(\bm{x},\bm{v},t) \to 0$ as $\|\bm{x}\|,\|\bm{v}\| \to \infty$. The function $U$ denotes the Morse potential, which consists of attractive and repulsive components:
\begin{equation}
    U(r) = -C_a e^{-\frac{r}{\ell_a}} + C_r e^{-\frac{r}{\ell_r}},
\end{equation}
where $C_a$ and $C_r$ denote the strengths of the attractive and repulsive interactions, respectively, and $\ell_a$ and $\ell_r$ are the corresponding length scales. We set $C_a = 8$, $C_r = 24$, $\ell_a = 3$, $\ell_r = 0.1$, $\alpha = 0.05$, $\beta = 0.1$. 
It was found in \cite{carrillo2009double} that, under mild initial conditions, particles exhibit clockwise or counterclockwise rotation around a central point after a transient period. Motivated by this behavior, we apply the proposed adaptive RBF approach to solve Eq.~\eqref{example3_equation} with the following initial condition:
\begin{equation}
    u(\bm{x}, \bm{v}, 0)=\frac{4}{\pi^2}\prod_{i=1}^2\exp(-2x_i^2)\cdot \prod_{i=1}^2\exp(-2v_i^2).
\end{equation}

Since both the spatial variable $\bm{x}$ and the velocity variable $\bm{v}$ are defined on the unbounded domain $\mathbb{R}^2$, we sample $(\bm{x},\bm{v}) \sim \mathcal{N}(\bm{0}, (\frac{5}{4})^2\cdot I_4)$. Moreover, based on the assumption that the analytical solution satisfies $u(\bm{x},\bm{v},t) \to 0$ as $\|\bm{x}\|,\|\bm{v}\| \to \infty$, we impose an additional constraint on the RBF approximation by enforcing a lower bound on the scales of each basis function, namely that each component of $\bm{\epsilon}$ satisfies $\bm{\epsilon} \ge \epsilon_0 = 0.1$. As a result, both the RBF approximation and the analytical solution vanish as $\bm{x}$ or $\bm{v}$ approaches infinity. With this vanishing boundary condition at infinity, we employ the following revised form of the loss function~\eqref{loss_rbf}:
 \begin{equation}
 \begin{aligned}
     &\hspace{-1cm}\text{Loss} = \sum_{j=1}^{\frac{T}{\Delta t}}\bigg(\sum_{i=1}^{N_0} (\partial_tu_N(\bm x_i, t_j) - \hat{A}(u_N(\bm x_i, t_j), \bm x_i, t_j))^2\\&\hspace{1cm}+ \lambda \big(\int_{\mathbb{R}^4} u_N(\bm{x}, \bm{v},t_0)\d \bm{x}\d\bm{v} - \int_{\mathbb{R}^4} u_N(\bm{x}, \bm{v},t_j)\d \bm{x}\d\bm{v}\big)^2\bigg)\Delta t.
     \end{aligned}
    \label{loss_rbf_example4}
 \end{equation}
 In Eq.~\eqref{loss_rbf_example4}, $\hat{A}$ denotes the learned spatial integrodifferential operator associated with the RBF approximation~\eqref{approximate_model}. The second term in the loss function serves as a regularization term that enforces conservation of the particle density, and we set $\lambda=2$. We plot the RBF approximation of the particle density $\int_{\mathbb{R}^2} u_N(\bm{x}, \bm{v}, t)\,\mathrm{d}\bm{v}$ for various values of $\bm{x}$, together with the corresponding scaled velocity field:
\begin{equation}
    \Big(\frac{\int_{\mathbb{R}^2}v_1u_N(\bm{x}, \bm{v}, t)\d\bm{v}}{\int_{\mathbb{R}^2}u_N(\bm{x}, \bm{v}, t)\d\bm{v}}, \frac{\int_{\mathbb{R}^2}v_2u_N(\bm{x}, \bm{v}, t)\d\bm{v}}{\int_{\mathbb{R}^2}u_N(\bm{x}, \bm{v}, t)\d\bm{v}}\Big).
\label{weighted_velocity}
\end{equation}

    \begin{figure}
    \centering
\includegraphics[width=0.9\linewidth]{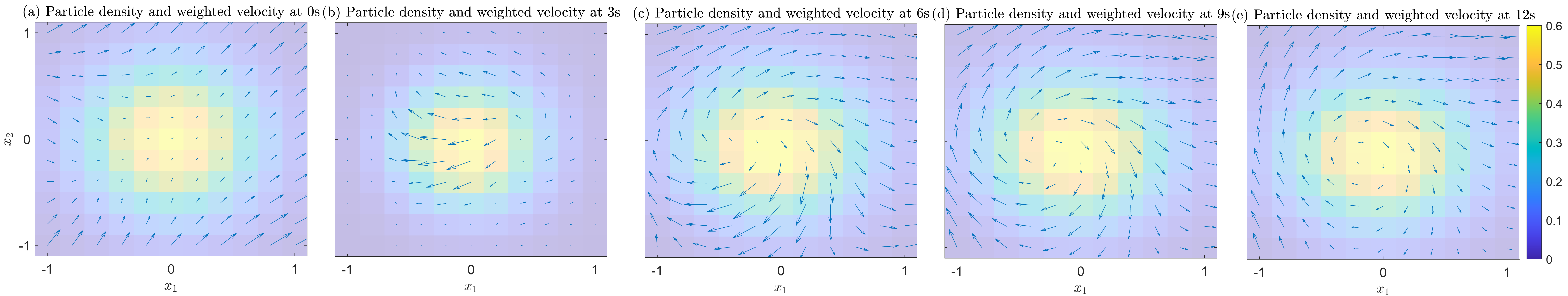}
    \caption{
    The particle density $\int_{\mathbb{R}^2} f(\bm{x}, \bm{v}, t)\,\mathrm{d}\bm{v}$ and the weighted velocity field~\eqref{weighted_velocity} at different times $t$, obtained using the proposed adaptive RBF method to numerically solve the 4D Vlasov equation~\eqref{example3_equation}.}
    \label{fig:example3}
\end{figure}

As shown in Fig.~\ref{fig:example3}, after a transient initial period, the particle density becomes concentrated within a circular region centered at the origin, and the weighted velocity field exhibits rotational motion around the origin. This behavior is consistent with the agent-based particle simulations reported in \cite{carrillo2009double} for Eq.~\eqref{example3_equation}. These results demonstrate that, even with only 40 basis functions in the RBF approximation, the proposed adaptive RBF approach can efficiently solve the four-dimensional spatiotemporal integrodifferential equation~\eqref{example3_equation}.

\end{example}

\section{Summary and conclusion}
\label{section4}
In this manuscript, we proposed an adaptive RBF approach for the efficient solution of multidimensional spatiotemporal integrodifferential equations. The proposed method dynamically and automatically adjusts the centers and scales of the RBFs over time, which is essential for accurately capturing the evolving dynamics of multidimensional spatiotemporal solutions. We further analyzed how RBF-based approximations can partially mitigate the curse of dimensionality when the solution exhibits spatial heterogeneity. From a numerical perspective, we demonstrated the efficacy of the proposed approach by solving a range of multidimensional spatiotemporal equations.

Several promising directions remain for future research. It would be valuable to investigate how different choices of radial basis functions-such as algebraically decaying RBFs or compactly supported basis functions \cite{wendland2017multiscale}-influence the accuracy and efficiency of spatiotemporal solvers. Additionally, developing efficient RBF-based frameworks for learning spatiotemporal integrodifferential equations from data represents a promising avenue for further study. Another important direction is the design of strategies for optimally allocating RBFs, including the use of multiscale radial basis functions \cite{billings2007generalized,kedward2017efficient} or adaptive control of the number of RBFs \cite{shao2025solving}. Also, it will be helpful to carry out a systematic comparison of our approach against traditional numerical methods, such as the finite element method, and machine-learning-based PDE and integrodifferential equation solvers on solving complex multidimensional spatiotemporal equations in terms of both computational costs and accuracy.
Finally, it would be of interest to   
explore whether incorporating deep neural networks with more advanced architectures could further enhance the capability of the proposed adaptive RBF approach for solving higher-dimensional spatiotemporal equations.

\section*{Acknowledgements}
The authors thank Prof. Robert Azencott and Prof. Ilya Timofeyev at the University of Houston and Prof. Philip K. Maini at the University of Oxford for their valuable comments and suggestions on this manuscript. 

\section*{Data Availability Statement}
No new data were created in this research. The code used in this research will be made publicly available upon acceptance of this manuscript.

%
 \section*{Conflict of interest}
 The authors declare that they have no conflict of interest.
 
\appendix
\section*{Appendix}
\section{Proof to Theorem~\ref{theorem1}}
\label{proof_theorem1}
In this section, we present the proof of Theorem~\ref{theorem1}. Since $u(\bm{x})$ vanishes on the boundary $\partial \Omega$, we define
\begin{equation}
    u_{\bm\epsilon}(\bm{x})\coloneqq \int_{\Omega}u(\bm{y})B_{\bm{\epsilon}^{-1}, \bm0}(\bm{x}-\bm{y})\d\bm{y}=\int_{\mathbb{R}^d}u(\bm{y})B_{\bm{\epsilon}^{-1}, \bm 0}(\bm{x}-\bm{y})\d\bm{y}.
\end{equation}
Then, we have:
\begin{equation}
\begin{aligned}
        |u_{\bm\epsilon}(\bm{x}) - u(\bm{x})| &\leq \int_{\mathbb{R}^d}u(\bm{y})B_{{\bm \epsilon}^{-1}, \bm0}(\bm{x}-\bm{y})\d\bm{y}-\int_{\mathbb{R}^d}u(\bm{x})B_{{\bm \epsilon}^{-1}, \bm0}(\bm{x}-\bm{y})\d\bm{y}\\
        &\leq \sum_{i=1}^d\|\partial_{x_i}u\|_{\infty, 0}\epsilon_i^{\frac{1}{2}} (1-\Phi(\epsilon^{-\frac{1}{2}})) + 2\big(1 - \Phi(\epsilon^{-\frac{1}{2}})\big)\|u\|_{\infty, 0}.
        \end{aligned}
        \label{rbf_error}
\end{equation} 

We consider the hyperbolic cross space constructed via Smolyak formulas based on polynomial interpolation at the extrema of Chebyshev polynomials, as proposed in \cite{barthelmann2000high}. The $j^{\text{th}}$ coordinate of the collocation points is given by:
\begin{equation}
    X_{i_j}\coloneqq \Big\{x_i^j=-\cos\left( \frac{\pi (j - 1)}{m_i - 1} \right), \quad j = 1, \ldots, m_i\Big\}.
\label{knots}
\end{equation}
In Eq.~\eqref{knots}, $m_i=1$ if $i=1$ and $m_i=2^{i-1}+1$ for $i>1$. The collocation points of the hyperbolic cross space are given as:
\begin{equation}
    H(q, d) = \bigcup_{q-d+1 \leq \|\mathbf{i}\| \leq q} (X_{i_1} \otimes \cdots \otimes X_{i_d}).
    \label{nest_grid}
\end{equation}
We denote by $n(q,d) \coloneqq |H(q,d)|$ the number of collocation points in the set $H(q,d)$. By the construction of the nested grids in Eq.~\eqref{nest_grid}, it follows from \cite{novak1999simple} that
\begin{equation}
2^{\,q-1} \;\le\; n(q,d) \;\le\; \binom{q-1}{d-1} \cdot2^{\,q}
\;\le\; q^{\,d-1} 2^{\,q}.
\end{equation}

The Smolyak algorithm for interpolation is given by
\begin{equation}
A(q, d) = \sum_{|\mathbf{i}| \leq q} (\Delta_{i_1} \otimes \cdots \otimes \Delta_{i_d})
\label{aqd}
\end{equation}
for integers $q > d$, where $\Delta_{i}\coloneqq \mathcal{I}_i-\mathcal{I}_{i-1}$, and $\mathcal{I}_i$ is the one-dimensional interpolation operator:
\begin{equation}
\mathcal{I}_i(f)(x_i^j) = f(x_i^j), \,\, x_i^h\in X_{i_j}.
\end{equation}

By the exactness of the Clenshaw--Curtis collocation points established in \cite{novak1996high}, the interpolation formula $A(q,d)$ defined in Eq.~\eqref{aqd} is exact on the following function space (i.e., the integral of any function in this space is uniquely determined by its values at the collocation points):
\begin{equation}
\mathbb{P}_{q,d} \coloneqq 
\sum_{\|\mathbf{i}\|_1 = q}
\left(
P_{x_1}(m_1-1) \otimes P_{x_2}(m_2-1) \otimes \cdots \otimes P_{x_d}(m_d-1)
\right),
\end{equation}
where $P_{x_i}(m_i-1)$ denotes the space of univariate polynomials in the variable $x_i$ of degree at most $m_i-1$.

We further define $\mathcal{I}_{q,d}$ as the $d$-dimensional interpolation operator associated with the Clenshaw--Curtis collocation points, such that for any $f \in \mathbb{P}_{q,d}$,
\begin{equation}
   f(\bm{x}) = \mathcal{I}_{q,d} f(\bm{x}), \qquad \forall\, \bm{x} \in H(q,d). 
\end{equation}

Then, there exist weights $\{w_i\}_{i=1}^N$ such that
\begin{equation}
\sum_{i=1}^N u(\bm{x}_i)\, w_i \, B_{\bm{\epsilon}^{-1},\bm{0}}(\bm{x}-\bm{x}_i)
=
\int_{\Omega}
\mathcal{I}_N^d\!\left(
u(\bm{y})\, B_{\bm{\epsilon}^{-1},\bm{0}}(\bm{x}-\bm{y})
\right)\, \mathrm{d}\bm{y},
\end{equation}
where $q(N,d) \in \mathbb{N}^+$ is a function of $N$ and $d$ such that the number of radial basis functions $N$ satisfies:
\begin{equation}
    n(q(N, d), d)\leq N < n(q(N, d)+1, d). 
\end{equation}
When $N$ is large enough such that $2^{\frac{q(N, d)-1}{d-1}}>q+1$, we conclude that 
\begin{equation}
    \frac{N^{\frac{1}{2}}}{2}\leq 2^{q-1}\leq  n(q, d).
\end{equation}
Observe that $\|u(\bm{y})B_{\bm\epsilon^{-1}}(\bm{x}-\bm{y})\|_{\infty, k}\leq 2^k\|u(\bm{y})\|_{\infty, k}\|B_{\bm\epsilon^{-1}}\|_{\infty, k}$. Using \cite[Theorem 8]{barthelmann2000high}, there exists a constant $c_{d, k}$ such that:
\begin{equation}
\begin{aligned}
       &|u_{\bm\epsilon}(\bm{x}) - \sum_{i=1}^N u(\bm{x}_i)w_iB_{\bm\epsilon^{-1},\bm0}(\bm{x}-\bm{x}_i)|\leq \int_{\Omega} u(\bm{y})B_{\bm\epsilon^{-1},\bm0}(\bm{x}-\bm{y})\d\bm{y} \\
       &\hspace{7cm}-  \int_{\Omega} \mathcal{I}_N^d (u(\bm{y}) B_{\bm\epsilon^{-1},\bm0}(\bm{x}-\bm{y}))\d\bm{y}\\
       &\leq 2^dc_{d, k}n(q(N, d), d)^{-k}\log n(q(N, d), d)^{(k+2)(d-1)+1}\|u(\bm{y})\|_{\infty, k}\|B_{\bm\epsilon^{-1},\bm0}(\bm{x}-\bm{y})\|_{\infty, k}\\
       &\leq \prod_{i=1}^d\epsilon_i^{-k}2^dc_{d, k}n(q(N, d), d)^{-k}\log n(q(N, d), d)^{(k+2)(d-1)+1}\|u(\bm{y})\|_{\infty, k}\|B_{\bm1,\bm0}(\bm{y})\|_{\infty, k}.
\end{aligned}
\end{equation}

Therefore, we conclude that:
\begin{equation}
    \begin{aligned}
        &\big|u(\bm{x}) - \sum_{i=1}^Nw_iu(\bm{x}_i)B_{\bm\epsilon^{-1}, \bm0}(\bm{x}-\bm{x}_i)\big|\leq |u(\bm{x}) - u_{\bm\epsilon}(\bm{x})| + \big|u_{\bm \epsilon}(\bm{x}) - \sum_{i=1}^Nw_iu(\bm{x}_i)B_{\bm \epsilon^{-1},\bm0}(\bm{x}-\bm{x}_i)\big|\\
        &\hspace{2cm}\leq \bigg(\sum_{i=1}^d\epsilon_i^{\frac{1}{2}}\|\partial_{x_i}u\|_{\infty, 0} + 2\|u\|_{\infty, 0}\big(1-\Phi(\bm\epsilon^{-\frac{1}{2}})\big)\bigg) \\
        &\hspace{3cm}+ \prod_{i=1}^d\epsilon_i^{-k}2^{d+2k}c_{d, k}N^{-\frac{k}{2}}(\log N)^{(k+2)(d-1)+1}\|u\|_{\infty, k}\|B_{\bm 1, \bm0}\|_{\infty, k},
    \end{aligned}
    \label{error_bound0}
\end{equation}
which proves Theorem~\ref{theorem1}. 

\section{Settings and hyperparameters of numerical experiments}
\label{training_details}
We list the hyperparameters and settings for each example in Table~\ref{tab:setting}.

{\scriptsize \begin{table}[h!]
\centering
\caption{\footnotesize Settings and hyperparameters for numerical experiments for each example. NN parameters include weights $w_{i, j, k}$, the weights $\tilde{w}_{i, j, k}$ for the ResNet technique, as well as biases $b_{i, k}$ in Fig.~\ref{fig:snn}.} 
{\scriptsize\begin{tabular}{lllll}
\toprule
 & Example~\ref{example1} & Example~\ref{example2}  & Example~\ref{example3}\\
\midrule
gradient descent method & Adam & Adam  & Adam\\
forward propagation method &  ResNet  & ResNet & ResNet\\
learning rate & 0.001  & 0.005  & 0.00002\\
number of epochs &2000  & 400  & 500 \\
Time horizon $T$ & 2  & 1 & 12  \\
Time step $\Delta t$ &0.05 & 0.05 & 0.15\\
Number of interior sample points $N_0$ & 300 & 300 & 300\\
Number of boundary sample points $N_1$ & 300 & 300 & $\backslash$\\
number of hidden layers in the NN & 3  &2 & 2 \\
activation function &  GELU  &GELU & GELU\\
number of neurons in each layer & 100  &300 & 200  \\
initialization for $w_{i, j, k}$ and $b_{i, j}$  & $\mathcal{N}(0, 0.001^2)$ & $\mathcal{N}(0, 0.001^2)$  & $\mathcal{N}(0, 0.005^2)$ & \\

\bottomrule
\end{tabular}}
\label{tab:setting}
\end{table}}

\section{Pseudocode of the adaptive RBF method for inverse problems in spatiotemporal integrodifferential equations}
\label{pseudocode_inverse}

\begin{algorithm}
\footnotesize
\caption{\footnotesize Pseudocode of the adaptive RBF approach for solving the inverse problem of inferring an integrodifferential operator in a multidimensional spatiotemporal equation from observed data.}
\begin{algorithmic}
  \STATE \textbf{Input:} initial condition $u(\bm{x},0)$, time step $\Delta t$, number of time steps $M$, and the number of training epochs $\text{epoch}_{\max}$.
  \STATE Initialize the neural ODE model shown in Fig.~\ref{fig:snn}.
  \STATE Construct an initial RBF approximation $u_N(\bm{x},0)$ of $u(\bm{x},0)$, and record the associated coefficients $c_i(0)$, centers $\bm{x}_i(0)$, and scales $\bm{\epsilon}_i(0)$.
  \FOR{$j = 1,\ldots,\text{epoch}_{\max}$}
    \STATE Use the \texttt{odeint} function with the approximate operator $\hat{A}(u_N,\bm{x},t)$ to compute $u_N(\bm{x},t_j)$ for $j = 1,\ldots,M$.
    \STATE Randomly sample $N_0$ interior points $\{\bm{x}_i\}_{i=1}^{N_0}$ and $N_1$ boundary points $\{\bm{y}_i\}_{i=1}^{N_1}$ for evaluating the loss function in Eq.~\eqref{loss_rbf_inverse}.
    \STATE Evaluate the loss function defined in Eq.~\eqref{loss_rbf_inverse}.
    \STATE Update the parameters of the neural ODE model by minimizing the loss function using gradient descent.
  \ENDFOR
  \STATE \textbf{Output:} the trained neural ODE model. The learned integrodifferential operator is then obtained via Eq.~\eqref{approximate_model}.
\end{algorithmic}
\label{algorithm_2}
\end{algorithm}

\section{Analysis on the width and depth of the neural network}
\label{example1_appendix}
We empirically investigate the impact of neural network width and depth on the accuracy of the proposed adaptive neural-network-based RBF approach. Specifically, we solve Eq.~\eqref{example1_model} from Example~\ref{example1} using neural networks with different architectures to represent the approximate operator $\hat{A}$ in Eq.~\eqref{approximate_model}, where the spatial dimension is $d=3$. The corresponding results are summarized in Table~\ref{tab:nn_structure}.
\begin{table}[h!]
\scriptsize
\centering
  \caption{Errors in the adaptive RBF approximation $u_N(\bm{x}, t=2)$ for Example~\ref{example1}. Here, ResNet indicates the use of the ResNet technique described in Fig.~\ref{fig:snn}. All hyperparameters and settings are identical to those of Example~\ref{example1}, as listed in Table~\ref{tab:setting}. The adaptive RBF approximation employs 30 basis functions.}
\begin{tabular}{ccccc}
\toprule  width & \# of layers & ResNet & initialization for weights \& biases & error   \\ 
\midrule 
25 & 3 & Yes & $\mathcal{N}(0, 0.001^2)$& $1.64\times10^{-4}$ \\ 
50 & 3 & Yes & $\mathcal{N}(0, 0.001^2)$& $3.14\times10^{-5}$  \\ 
 100 & 3 & Yes & $\mathcal{N}(0, 0.001^2)$ & $9.71\times10^{-5}$  \\
  200 & 3 & Yes & $\mathcal{N}(0, 0.001^2)$ & $1.06\times10^{-4}$  \\
    100 & 1 & Yes & $\mathcal{N}(0, 0.001^2)$&  $2.50\times10^{-3}$
 \\ 
  100 & 2 & Yes & $\mathcal{N}(0, 0.001^2)$& $9.22\times10^{-5}$
  \\ 
100 & 4 & Yes & $\mathcal{N}(0, 0.001^2)$& $7.57\times10^{-5}$  \\ 
100 & 5 & Yes & $\mathcal{N}(0, 0.001^2)$& $5.48\times10^{-5}$  \\ 
   100 & 1 & No & $\mathcal{N}(0, 0.001^2)$& $1.80\times10^{-4}$  \\ 
100 & 2 & No & $\mathcal{N}(0, 0.001^2)$& $2.21\times10^{-5}$  \\ 
100 & 3 & No & $\mathcal{N}(0, 0.001^2)$& $1.39\times10^{-4}$ \\ 
100 & 4 & No & $\mathcal{N}(0, 0.001^2)$& $5.66\times10^{-4}$ \\ 
100 & 5 & No & $\mathcal{N}(0, 0.001^2)$& $4.41\times10^{-4}$ \\ 
\bottomrule
\end{tabular}
\label{tab:nn_structure}
\end{table}

\newpage

\bibliographystyle{spmpsci}      


\end{document}